\title[A no-contact result]{A no-contact result for a plate-fluid\\
interaction system in dimension three}
\author[M.~Bukal]{Mario Bukal}
\address{University of Zagreb Faculty of Electrical Engineering and Computing, Unska 3, 10000 Zagreb, Croatia}
\email{mario.bukal@fer.hr}
\author[I.~Kukavica]{Igor Kukavica}
\address{Department of Mathematics, University of Southern California, Los Angeles, CA 90089}
\email{kukavica@usc.edu}
\author[L.~Li]{Linfeng Li}
\address{Department of Mathematics, University of California Los Angeles, Los Angeles, CA 90095}
\email{lli265@math.ucla.edu}
\author[B.~Muha]{Boris Muha}
\address{University of Zagreb, Faculty of Science, Department of Mathematics, Bijeni\v cka cesta 30, 10000 Zagreb, Croatia}
\email{borism@math.hr}
\chardef\forshowkeys=0
\chardef\refcheck=0
\chardef\showllabel=0
\chardef\sketches=0
\chardef\showcolors=0
\let\pa\partial   
\let\eps\epsilon  
\newcommand{\N}{{\mathbb N}}    
\newcommand{\R}{{\mathbb R}}  
\newcommand{\diver}{\operatorname{div}}
\newcommand{\dist}{\operatorname{dist}}
\newcommand{\D}{\operatorname{D}}
\newcommand{\dd}{{\mathrm{d}}}
\newcommand{\red}{\textcolor{red}}
\newcommand{\nablax}{\nabla_x}
\newcommand{\Deltax}{\Delta_x}
\begin{document}
\def\Omegaone{\text{1-channel}}
\def\ch{{\hat{c}}}
\def\bp{\bar{\partial}}
\def\bbeta{\beta}
\def\aalpha{\mu}
\def\tt{\mathbb{T}^2}
\def\ttt{t_\ast}
\def\xxx{x_\ast}
\def\iii{I_\ast}
\def\bbb{B_\ast}
\def\aa{\nu}
\def\etah{\eta}
\def\uh{u}
\def\vh{v}
\def\qh{q}
\def\ph{p}
\def\inp{{\colr IN PROGRESS}}
\def\rth{{\colr REVISED UP TO HERE}}
\def\fNS{f}
\def\fP{g}
\def\bnew{\colr {\bf }}
\def\enew{\colb {}}
\def\bold{\colu {\bf }}
\def\eold{\colb {}}
\def\inte{\int_{\Omega_\epsilon}}
\def\intb{\int_{\Gamma_\epsilon}}
\def\D{D}
\def\CommT{{R}}
\def\BndT{{B}}

\def\XX{X}
\def\YY{Y}
\def\ZZZ{Z}

\def\intint{\int    \int}
\def\OO{\mathcal O}
\def\SS{\mathbb S}
\def\CC{\mathbb C}
\def\N{\mathbb N}
\def\RR{\mathbb R}
\def\TT{\mathbb T}
\def\ZZ{\mathbb Z}
\def\HH{\mathbb H}
\def\RSZ{\mathcal R}
\def\LL{\mathcal L}
\def\SL{\LL^1}
\def\ZL{\LL^\infty}
\def\GG{\mathcal G}
\def\erf{\mathrm{Erf}}
\def\mgt#1{\textcolor{magenta}{#1}}
\def\ff{\rho}
\def\gg{G}
\def\sqrtnu{\sqrt{\nu}}
\def\ww{w}
\def\ft#1{#1_\xi}
\def\lec{\lesssim}
\def\les{\lesssim}
\def\gec{\gtrsim}
\renewcommand*{\Re}{\ensuremath{\mathrm{{\mathbb R}e  }}}
\renewcommand*{\Im}{\ensuremath{\mathrm{{\mathbb I}m  }}}

\ifnum\showllabel=1
\def\llabel#1{\marginnote{\color{lightgray}\rm\small(#1)}[-0.0cm]\notag}
\else
\def\llabel#1{\notag}
\fi

\newcommand{\norm}[1]{\left\|#1\right\|}
\newcommand{\nnorm}[1]{\lVert #1\rVert}
\newcommand{\abs}[1]{\left|#1\right|}
\newcommand{\NORM}[1]{| | | #1| | |}

\newtheorem{theorem}{Theorem}[section]
\newtheorem{Theorem}{Theorem}[section]
\newtheorem{corollary}[theorem]{Corollary}
\newtheorem{Corollary}[theorem]{Corollary}
\newtheorem{proposition}[theorem]{Proposition}
\newtheorem{Proposition}[theorem]{Proposition}
\newtheorem{Lemma}[theorem]{Lemma}
\newtheorem{lemma}[theorem]{Lemma}

\theoremstyle{definition}
\newtheorem{definition}{Definition}[section]
\newtheorem{Remark}[theorem]{Remark}

\def\theequation{\thesection.\arabic{equation}}
\numberwithin{equation}{section}

\definecolor{mygray}{rgb}{.6,.6,.6}
\definecolor{myblue}{rgb}{9, 0, 1}
\definecolor{colorforkeys}{rgb}{1.0,0.0,0.0}

\newlength\mytemplen
\newsavebox\mytempbox
\def\weaks{\text{            weakly-* in }}
\def\weak{\text{            weakly in }}
\def\inn{\text{            in }}
\def\cof{\mathop{\rm cof  }\nolimits}
\def\Dn{\frac{\partial}{\partial N}}
\def\Dnn#1{\frac{\partial #1}{\partial N}}
\def\tdb{\tilde{b}}
\def\tda{b}
\def\qqq{u}
\def\lat{\Delta_{\bx}}
\def\biglinem{\vskip0.5truecm\par==========================\par\vskip0.5truecm}

\def\inon#1{\hbox{\ \ \ \ \ \ \ }\hbox{#1}}               
\def\onon#1{\inon{on~$#1$}}
\def\inin#1{\inon{in~$#1$}}

\def\FF{F}
\def\andand{\text{\indeq and\indeq}}
\def\ww{w(y)}
\def\ll{{\color{red}\ell}}
\def\ee{\epsilon_0}
\def\startnewsection#1#2{ \section{#1}\label{#2}\setcounter{equation}{0}}   
\def\nnewpage{ }
\def\sgn{\mathop{\rm sgn  }\nolimits}    
\def\Tr{\mathop{\rm Tr}\nolimits}    
\def\div{\mathop{\rm div}\nolimits}
\def\curl{\mathop{\rm curl}\nolimits}
\def\dist{\mathop{\rm dist}\nolimits}  
\def\supp{\mathop{\rm supp}\nolimits}
\def\indeq{\quad{}}           
\def\period{.}                       
\def\semicolon{  ;}
\def\pa{\partial}            
\def\pt{\partial_t}                

\ifnum\showcolors=1
\def\colr{\color{red}}
\def\colc{\color{cyan}}
\def\colrr{\color{black}}
\def\colb{\color{black}}
\definecolor{colorgggg}{rgb}{0.1,0.5,0.3}
\definecolor{colorllll}{rgb}{0.0,0.7,0.0}
\definecolor{colorhhhh}{rgb}{0.3,0.75,0.4}
\definecolor{colorpppp}{rgb}{0.7,0.0,0.2}
\definecolor{coloroooo}{rgb}{0.45,0.0,0.0}
\definecolor{colorqqqq}{rgb}{0.1,0.7,0}
\def\coly{\color{lightgray}}
\def\colg{\color{colorgggg}}
\def\collg{\color{colorllll}}
\def\cole{}
\def\coll{\color{colorqqqq}}
\def\coleo{\color{colorpppp}}
\def\colu{\color{blue}}
\def\colc{\color{colorhhhh}}
\def\colW{\colb}   
\definecolor{coloraaaa}{rgb}{0.6,0.6,0.6}
\def\colw{\color{coloraaaa}}
\else
\def\colr{\color{black}}
\def\colrr{\color{black}}
\def\colb{\color{black}}
\def\coly{\color{black}}
\def\colg{\color{black}}
\def\collg{\color{black}}
\def\cole{\color{black}}
\def\coleo{\color{black}}
\def\colu{\color{black}}
\def\colc{\color{black}}
\def\colW{\color{black}}
\def\colw{\color{black}}
\fi

\def\comma{ {\rm ,\qquad{}} }            
\def\commaone{ {\rm ,\quad{}} }          
\def\nts#1{{\color{red}\hbox{\bf ~#1~}}} 
\def\ntsf#1{\footnote{\color{colorgggg}\hbox{#1}}} 
\def\blackdot{{\color{red}{\hskip-.0truecm\rule[-1mm]{4mm}{4mm}\hskip.2truecm}}\hskip-.3truecm}
\def\bluedot{{\color{blue}{\hskip-.0truecm\rule[-1mm]{4mm}{4mm}\hskip.2truecm}}\hskip-.3truecm}
\def\purpledot{{\color{colorpppp}{\hskip-.0truecm\rule[-1mm]{4mm}{4mm}\hskip.2truecm}}\hskip-.3truecm}
\def\greendot{{\color{colorgggg}{\hskip-.0truecm\rule[-1mm]{4mm}{4mm}\hskip.2truecm}}\hskip-.3truecm}
\def\cyandot{{\color{cyan}{\hskip-.0truecm\rule[-1mm]{4mm}{4mm}\hskip.2truecm}}\hskip-.3truecm}
\def\reddot{{\color{red}{\hskip-.0truecm\rule[-1mm]{4mm}{4mm}\hskip.2truecm}}\hskip-.3truecm}

\def\tdot{{\color{green}{\hskip-.0truecm\rule[-.5mm]{3mm}{3mm}\hskip.2truecm}}\hskip-.1truecm}
\def\gdot{\greendot}
\def\bdot{\bluedot}
\def\ydot{\cyandot}
\def\rdot{\cyandot}
\def\fractext#1#2{{#1}/{#2}}
\def\ii{\hat\imath}
\def\boris#1{\textcolor{blue}{#1}}
\def\vlad#1{\textcolor{cyan}{#1}}
\def\coli{\color{colorqqqq}}
\def\igor#1{\text{{\textcolor{colorqqqq}{IK: #1}}}}
\def\linfeng#1{\textbf{\textcolor{red}{LL:#1}}}
\def\igorf#1{\footnote{\text{{\textcolor{colorqqqq}{IK: #1}}}}}
\def\mario#1{\textcolor{purple}{MB: #1}}

\newcommand{\myr}[1]{{\color{red} #1 }}%
\newcommand{\uvro}{\upvarrho}
\newcommand{\bv}{\boldsymbol v}
\newcommand{\bV}{\boldsymbol V}
\newcommand{\bu}{\boldsymbol u}
\newcommand{\bs}{\boldsymbol}
\newcommand{\bx}{x}
\newcommand{\obx}{\overline{x}}
\newcommand{\bw}{w}
\newcommand{\bn}{\mathbf n}
\newcommand{\by}{\boldsymbol y}
\newcommand{\bz}{\boldsymbol z}
\newcommand{\bef}{\boldsymbol f}
\newcommand{\material}{\Omega_{\eps,h}}
\newcommand{\vol}{\operatorname{vol}}
\newcommand{\bog}{{\rm Ext}_{\diver}}

\def\AA{Y}
\newcommand{\p}{\partial}
\newcommand{\UE}{U^{\rm E}}
\newcommand{\PE}{P^{\rm E}}
\newcommand{\KP}{K_{\rm P}}
\newcommand{\uNS}{u^{\rm NS}}
\newcommand{\vNS}{v^{\rm NS}}
\newcommand{\pNS}{p^{\rm NS}}
\newcommand{\omegaNS}{\omega^{\rm NS}}
\newcommand{\uE}{u^{\rm E}}
\newcommand{\vE}{v^{\rm E}}
\newcommand{\pE}{p^{\rm E}}
\newcommand{\omegaE}{\omega^{\rm E}}
\newcommand{\ua}{u_{\rm   a}}
\newcommand{\omegaa}{\omega_{\rm   a}}
\newcommand{\ue}{u_{\rm   e}}
\newcommand{\ve}{v_{\rm   e}}
\newcommand{\omegae}{\omega_{\rm e}}
\newcommand{\omegaeic}{\omega_{{\rm e}0}}

\def\red#1{\textcolor{red}{#1}}

\begin{abstract}
We address the fluid-structure interaction between a viscous
incompressible fluid and an elastic plate forming its moving upper
boundary in three dimensions. The fluid is described by the
incompressible Navier-Stokes equations with a free upper boundary that
evolves according to the motion of the structure, coupled via the velocity- and stress-matching conditions. Under the natural energy bounds and
additional regularity assumptions on the weak solutions, we prove a
non-contact property with a uniform separation of the plate from the
rigid boundary. The result does not require damping in the plate
equation.

\end{abstract}

\maketitle

\setcounter{tocdepth}{1} 

\colb
\section{Introduction}
\label{sec01} 
\subsection{Problem formulation}\label{ssec01.1}
In this paper, we study the interaction between a viscous, incompressible fluid and a moving elastic structure forming the fluid’s upper boundary; the structure is modeled either as a purely elastic plate or as a structurally damped plate.
We assume that the fluid domain at a time $t\geq0$, denoted by $\Omega_{\eta(t)}$, is a subgraph of a space-time dependent function $\etah\colon \tt\times[0,+\infty)\to {\mathbb R}$,
which represents the height of the moving upper boundary. 
Here, $\tt$ denotes the two-dimensional torus with side-length~$1$.
Namely, the fluid domain at a time $t\geq 0$ is given by the set 
\begin{equation*}
	\Omega_{\eta (t)}=
	\{  (x, z) : x=(x_1,x_2) \in \tt,
	z\in (0,\eta(\bx,t))\}\subseteq\R^3.
\end{equation*}
The fluid is assumed to be incompressible, Newtonian, and its motion is described by the incompressible Navier-Stokes equations
\begin{align}
\begin{split}
&
\partial_{t} u
- 
\Delta u
+ u\cdot \nabla u
+ 
\nabla p
= f \onon{\Omega_{\eta(t)}\times (0,T)},
\\&
\diver u=0 \onon{\Omega_{\eta(t)}\times (0,T)}
,
\end{split}
\label{EQNS}
\end{align}
where $u\colon \Omega_{\eta(t)} \times (0,T) \to \mathbb{R}^3$, $p\colon \Omega_{\eta(t)} \times (0,T) \to \mathbb{R}$, and $f\colon \Omega_{\eta(t)} \times (0,T) \to \mathbb{R}^3$ denote the velocity, pressure, and external force, respectively. As is standard in the analysis of fluid-structure interaction problems, we use $\Omega_{\eta} \times (0,T)$ as shorthand for $\bigcup_{t \in (0,T)} \Omega_{\eta(t)} \times \{t\}$.
The height of the moving upper boundary $\eta(\bx,t)$ is described by a fourth-order damped plate equation
\begin{equation}
\etah_{tt}
- 
\aa
\Delta_{\bx} \etah_t  
+ 
\Delta_{\bx}^2 \etah
= G \onon{\tt\times (0,T)}
,
\label{EQPL}
\end{equation}
where $\Delta_{\bx}=\partial_{11}+\partial_{22}$ is the horizontal Laplacian and $\aa \geq 0$ is a constant. 
Since the reference configuration of the plate is flat, we may assume that it has a constant height equal to $H_0$, which depends on initial data. Writing $\eta = H_0+\zeta$, where $\zeta$ denotes the out-of-plane displacement of the plate relative to this reference configuration, it becomes evident that \eqref{EQPL} is equivalent to the standard description of the plate dynamics in elasticity theory in terms of~$\zeta$. 
Note that the case $\aa=0$ in \eqref{EQPL} corresponds to a perfectly elastic plate, while $\aa>0$ describes a linear elastic plate with structural square root damping which exhibits the empirically observed damping rates in elastic systems \cite{CR}. 
In~\eqref{EQPL}, $G (\bx,t)$ represents the external force combined with the fluid force acting on the structure in the vertical direction $e_3=(0,0,1)$. 
Namely,
\begin{equation}
	G
	=
	g
	-S_{\eta}
	(-p I+\nabla u) (\bx, \eta (\bx,t),t)
	n^{\eta}\cdot e_3
	\onon{\tt\times (0,T)},
\label{EQ82a}
\end{equation}
where $g\colon \tt \times (0,T)\to \mathbb{R}$ is the external force, $S_{\eta} (\bx,t)=\sqrt{1+|\nablax \eta (\bx,t)|^2}$ is the surface element of the deformed plate, and $n^{\eta}$ denotes the unit outward normal on $\Gamma(t)$, the graph of $\eta (\bx, t)$, given by
\begin{align}
n^{\eta}
= 
\frac{1}{\sqrt{1+  |\nablax \eta|^2}}
(-\partial_{1} \eta, -\partial_{2} \eta, 1).
\llabel{EQ83}
\end{align}
We assume the no-slip and the velocity matching conditions at the bottom and top of the fluid domain, respectively, i.e.,
\begin{align}
\begin{split}
&u(\bx,0,t)=0
\onon{\tt \times (0,T)},
\\&
u(\bx, \eta(\bx,t),t)
=(0,0,\etah_t)
\onon{\tt\times (0,T)}.
\label{EQ26}
\end{split}
\end{align}
Finally, we prescribe the initial conditions:
\begin{align}\label{IC}
u(\cdot,0)=u_0, \quad \eta(\cdot,0)=\eta_0, \quad \eta_t(\cdot,0)=\eta_1,
\end{align}
which satisfy the following compatibility conditions:
\begin{align*}
\diver u_0=0 \onon{\Omega_{\eta_0}}, \quad
u_0\cdot n^{\eta_0}=0 \onon{\Gamma_{\eta_0}}, \quad
u_0(\bx,0) =0 \onon{\tt}.
\end{align*}
\begin{Remark}
{
	From the fluid mechanical point of view, in the system \eqref{EQNS}--\eqref{IC} we adopt the Cauchy stress tensor  
	\begin{equation}
		\sigma(u,p)=-p \mathbb{I}_3 
		+ 
		\nabla u
		,
		\label{EQ83}
	\end{equation}
i.e., we omit the transposed gradient term~$\nabla^{T} u$.
	Following \cite[Remark 1]{GH}, we argue that $(\nabla^{T} u) n^{\eta}\cdot e_3=0$ on $\Gamma(t)$ and so that the term $G$ in~\eqref{EQ82a} remains unchanged even if the transposed term is included. 
	For the sake of completeness, we reproduce the argument here. 
	Differentiating the boundary condition $\uh_i(x,\etah(x,t),t)=0$, where $i=1,2$, with respect to horizontal variables gives
	\begin{align}
		\begin{split}
			\partial_1 \uh_1+\partial_3 \uh_1 \partial_1\etah=0,
			\\
			\partial_2 \uh_2+\partial_3 \uh_2 \partial_2\etah=0.
		\end{split}
		\llabel{EQ44}
	\end{align}
A straightforward calculation then yields 
	\begin{align}
		\begin{split}
			(\nabla^{T} u) n^\eta\cdot e_3
			&=
			\frac{1}{\sqrt{1+|\nabla_x \eta|^2}} 
			(-\partial_1\etah
			\partial_3 \uh_1
			-
			\partial_2\etah\partial_3 \uh_2
			+
			\partial_3 \uh_3)
			=
			\frac{\diver u}{\sqrt{1+|\nabla_x \eta|^2}}
			=
			0
			\llabel{EQ43}
		\end{split}
	\end{align} 
on~$\Gamma (t)$.
Hence, without loss of generality, we use the Cauchy stress tensor $\sigma (u,p)$ in~\eqref{EQ83}.
}
\end{Remark}

From the perspective of applied PDEs and fluid mechanics, contact between a deformable structure and a rigid boundary represents a fundamental singular regime in fluid-structure interaction, where the governing equations may lose ellipticity or even cease to be well-defined. 
In this paper, we address the question of whether such contact can occur between an elastic plate and the bottom of a fluid cavity in three space dimensions (see Figure~1). 

\begin{figure}
\includegraphics[width = 6cm]{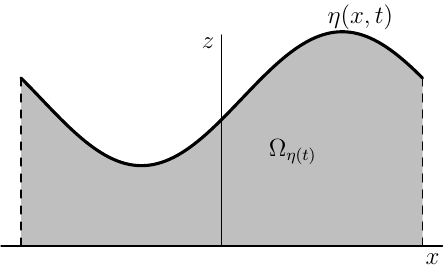} \hspace{1cm}
\includegraphics[width = 6cm]{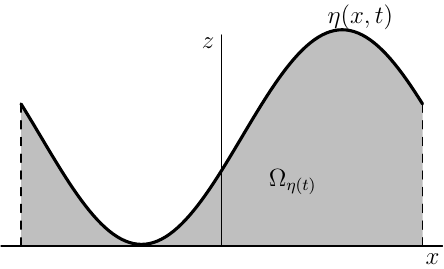}
\caption{2D cross sections of no-contact and contact configurations.} 
\end{figure}

\subsection{Brief literature review}\label{ssec01.2}
Many studies on the existence of solutions for fluid-structure interaction (FSI) problems focus on the motion of a rigid body immersed in a viscous incompressible fluid, governed by the Navier-Stokes equations; see, for instance, \cite{CST, DE1, DE2, Fe2, GM, SST, Ta, TT}.
A central difficulty in this setting is the possibility of body-body or body-boundary collisions: in particular, these existence results are valid up to a contact,
with a few exceptions \cite{CN,Fe1, SST}, where special weak solutions are constructed that persist even after collisions.
Consequently, to establish global-in-time existence of solutions without smallness assumptions on the initial data, one either has to introduce a suitable notion of solution beyond contact or prove that the contact never occurs. 
In the case of fluid-rigid body interaction, by now it is well-understood in which cases collision can be excluded; see, for instance, \cite{GVH,HT} and the references therein.
However, the question of whether contact can occur in fluid-structure interaction problems involving elastic structures remains largely open.
In two dimensions, Grandmont and Hillairet established in \cite{GH} global strong solutions for a viscoelastic beam-fluid interaction system by first ruling out contact and then propagating regularity.
More recently, in \cite{BR}, Breit and Roy  showed that contact can be excluded for weak solutions of the compressible beam-fluid system, provided certain additional regularity assumptions are satisfied.
It is worth noting that the approach in \cite{BR,GH} relies on a stream-function formulation of the fluid test function in two dimensions; see \cite[Remark~4.6]{BR}, where extending the no-contact result to three dimensions is posed as an open problem. In contrast, weak-solution frameworks that allow for contact have also been developed; see~\cite{CN,KMT}. 

In \cite{Le1,Le2}, for the same fluid-structure interaction problem considered in the present paper, the author proved global existence of strong solutions for small initial data and local-in-time existence of strong solutions for arbitrary initial data, in both two- and three-dimensional settings.
To obtain global strong solutions in three-dimensional space, one of the main difficulties is the possibility of a contact, where high regularity of the plate is needed to prevent it from occurring. 

Finally, we conclude the literature review by noting that results on fluid-beam systems in two dimensions, and, more generally, on fluid-shell systems in three dimensions, can be found in~\cite{Be,BKMT,CDEG,CGH,Gr1,Gr2,MC,MS}.


\subsection{The main result}\label{sec02}
The main goal of the present  paper is to establish a no-contact result for weak solutions $(u,\eta)$ under natural energy bounds together with an additional regularity assumption in three-dimensional space.
To the best of our knowledge, this is the first such no-contact result for a three-dimensional fluid-elastic structure interaction system.
Specifically, we obtain a uniform control of $\|\eta^{-1}(t)\|_{L^\infty(\mathbb T^2)}$ for all $t\in (0,T)$.
This answers the open question in \cite[Remark~4.6 (a)]{BR} in the affirmative.
The central ingredient of our analysis is a distance estimate (Proposition~\ref{P01}) that couples the fluid dissipation to a carefully designed divergence-free test function~$w$. Namely, we test the system with the couple $(\Delta_x \eta,w)$ which produces, after some cancellations and integrations by parts, a closed inequality for $\Vert \eta^{-1} (t)\Vert_{L^1 (\tt)}$,
which in turn yields the $L^\infty$ bound through a radial integral computation (see~Lemma~\ref{L03}).
Since the pair $(\Delta_x \eta,w)$ is not directly admissible as a test function in the weak formulation,
we justify its use by a smooth approximation scheme (see Appendix~\ref{sec04}).

The construction of $w$ is motivated by the shape of the fluid velocity in the lubrication approximation regime which describes close-to-contact dynamics of fluids~\cite{Sz}. The small relative thickness of the fluid height leads to a scale separation in which the pressure is depth-independent and the dominant part of the fluid velocity is the Poiseuille flow. Given the elastic structure on top of the fluid, the lubrication approximation regime can be described with a single nonlinear evolution equation of order six in spatial variables (cf.~\cite{BM} for 2D case where this lubrication approximation procedure has been rigorously justified). A key insight is that $\Vert \eta^{-1} (t)\Vert_{L^1 (\tt)}$ is a Lyapunov functional for this sixth-order equation, while here for the full FSI model and without smallness assumption we are able to control the growth of $\Vert \eta^{-1} (t)\Vert_{L^1 (\tt)}$.

First, we state the definition of the weak solution of the system \eqref{EQNS}--\eqref{IC}.
\begin{definition}
\label{D01}
(Weak solution)
Let $T>0$ and $(f,g,\eta_0,\eta_1, u_0)$ be such that 
\begin{align}
\begin{split}
&f\in L^2 ((0,T),L^2 (\Omega_{\eta})),\quad
g\in L^2 ((0,T), L^2 (\tt)),
\\&
\eta_0 \in H^2 (\tt) \text{~with~} \min_{x\in\tt} \eta_0(x) >0,\quad \eta_1 \in L^2 (\tt),\quad
u_0 \in L^2 (\Omega_{\eta_0}).
\end{split}
\llabel{EQ51}
\end{align}
We say that $(\eta,u)$ is a weak solution of
\eqref{EQNS}--\eqref{IC} if the following conditions are fulfilled:
\begin{enumerate}
\item For the structure height $\eta$, we have
\begin{align}
\eta \in W^{1,\infty} ((0,T), L^2 (\tt))
\cap L^\infty ((0,T), H^2 (\tt)) \quad 
\text{with~$ \eta>0$ a.e.},
\label{EQ11b}
\end{align}
$\eta (x,0)=\eta_0 (x)$, and $ \eta_t (x,0) = \eta_1 (x)$.

\item The fluid velocity $u$ satisfies
\begin{align}
u \in L^2 ((0,T), H^1 (\Omega_\eta)^3)
\cap 
L^\infty ((0,T), L^2 (\Omega_\eta)^3)
,
\label{EQ11a}
\end{align}
with $u(\bx, \eta (x,t),t) =  \eta_t (\bx,t) e_3$, $u(x,0,t)=0$, and $u(x,0)=u_0 (x)$.

\item 
For all test functions $(\phi, \Phi)$ with
\begin{align}
\begin{split}
	&
	\phi
\in 
W^{1,\infty}  ((0,T), L^2 (\tt))
\cap
L^\infty ((0,T), H^2 (\tt)),
\\&
\Phi \in
W^{1,2}((0,T), L^2 (\Omega_\eta)^3)
\cap
L^\infty ((0,T), L^2(\Omega_\eta)^3) 
\cap 
L^2 ((0,T), V_{\eta})
\end{split}
\label{EQ60a}
\end{align} 
where $V_{\eta} = \{u \in H^1 (\Omega_\eta)^3 : \diver u =0\}$,
and $\Phi (x,\eta (x,t),t) = \phi(x,t) e_3$,
the equality 
\begin{align}
\begin{split}
\frac{d}{dt}
\left(	\int_{\tt}
 \eta_t \phi 
+
\int_{\Omega_\eta}
u\cdot \Phi
\right)
&
= \int_{\Omega_\eta}
\left(	u  \cdot\Phi_t 
+
u \otimes u : \nabla \Phi
-\nabla u : \nabla \Phi
+f \cdot \Phi \right)
\\&
\quad
+ \int_{\tt}
\left(\eta_t \phi_t
+ 
g \phi
-
\Deltax \eta \Deltax \phi
+
\aa  \eta_t  \Deltax \phi
\right)
\label{EQweak}
\end{split}
\end{align}
holds in $\mathcal{D}'(0,T)$.
\end{enumerate}
\end{definition}

Note that since we study a moving-boundary problem, we work with Bochner spaces defined on time-dependent spatial domains. These spaces are standard in the analysis of fluid-structure interaction; see \cite[Section~1.3]{CDEG} for details.
We point out that the existence of weak solutions for closely related three-dimensional plate-fluid interaction model has been established in \cite[Theorem~1]{Gr1}, where the author considers the clamped boundary condition in the horizontal directions; see also \cite{TW, MC} for the existence of weak solutions for other fluid-structure interaction models.
Moreover, the authors of the present paper have established a global-in-time existence result of weak solutions for the same model considered here; see \cite[Proposition~3.1]{BKLM}.
We also note that, for the weak solutions constructed in those works, the following dichotomy holds: Either the solution exists globally in time and no contact occurs, or it exists only up to a finite time $T_\ast$ at which contact occurs. However, weak solutions are generally not unique; therefore, the fact that a particular construction avoids contact does not imply that contact is excluded for an arbitrary weak solution in the same class. Theorem~\ref{T01} rules out contact for any weak solution satisfying the additional regularity assumptions and, moreover, yields a uniform positive separation of the plate from the rigid boundary. By contrast, in the class of strong solutions one has existence and uniqueness (locally in time, or globally for small data), and such solutions do not exhibit contact; see, e.g.,~\cite{Le1,Le2}.

The following theorem is our main result.
\cole
\begin{Theorem}
\label{T01}
Let $\nu \geq 0$.
Assume that $(u,\eta)$ is a weak solution
to~\eqref{EQNS}--\eqref{IC}.
In addition, suppose that
\begin{align}
\begin{split}
&
u
\in
H^1 ((0,T), L^2 (\Omega_{\eta})^3),
\\&
\eta
\in
L^{\infty} ((0,T), H^{10/3} (\tt))
\cap H^1 ((0,T), H^1 (\tt))
.
\end{split}
\llabel{EQregularity2}
\end{align}
Then there exists a constant $C>0$, depending on the regularity of $(u,\eta)$
 and $(u_0,\eta_0,\eta_1)$,
	such that
	\begin{align}
		\sup_{t\in (0,T)}
		\Vert \eta^{-1} (t,\cdot)\Vert_{L^\infty (\tt)}
		\leq
		C.
		\label{EQnoContact}
	\end{align}
In particular, there is no contact on $[0,T]$.
\end{Theorem}
\colb

Note that the domain $\Omega_{\eta}$
in Definition~\ref{D01}
may degenerate at the times of the possible
contact;
this is handled by $T_{\ast}$ in the proof of the theorem below.

Formally testing \eqref{EQNS}$_1$ and \eqref{EQPL} with $u$ and $\eta_t$,
respectively,
integrating by parts in space, and integrating in time from $0$ to $t$, we arrive at
\begin{align}
	\begin{split}
		&
		\frac{1}{2}
		\left(
		\|u(t)\|^2_{L^2(\Omega_{\eta} (t))}
		+
		\|\Delta_{\bx}\etah(t)\|^2_{L^2(\tt)}
		+
		\|\partial_t\etah(t)\|^2_{L^2(\tt)} 
		\right)
		+
		\aa
		\int_0^t
		\int_{\tt}
		|\partial_t\nabla_{\bx}
		\eta|^2
		\\&\indeq
		+
		\int_0^t
		\int_{\Omega_{\etah}(s)}|\nabla u|^2
		=
		\frac{1}{2}
		\left(	\Vert u_0\Vert_{L^2 (\Omega_{\eta (0)})}^2
		+
		\|\etah_1\|^2_{L^2(\tt)} 
		+ 
		\|\Delta_{\bx} \etah_0\|^2_{L^2(\tt)}
		\right)
		\\&\qquad \qquad\qquad \qquad\qquad \qquad
		+
		\int_0^t \int_{\Omega_{\eta}(s)}
		\fNS\cdot u
		+
		\int_0^t \int_{\tt} 
		g \partial_t \eta,
		\label{EI}
	\end{split}
\end{align}
where we also used the Reynolds transport theorem.
The regularity of $(u,\eta)$ in~\eqref{EQ11b} and \eqref{EQ11a} is motivated by the basic energy bound \eqref{EI} when $f=g=0$.
\begin{Remark}
Using the incompressibility condition \eqref{EQNS}$_2$ and the kinematic coupling \eqref{EQ26}$_2$, we obtain
\begin{align}
\begin{split}
	&
	\frac{d}{d t}
	\int_{\mathbb{T}^2}
	\eta (\bx,t)
	=
	\int_{\mathbb{T}^2}
	\pt \eta (\bx,t)
	=
	\int_{\mathbb{T}^2} 
	u_3 (\bx, \eta (\bx,t), t)
	\\&
	=
	\int_{\partial {\Omega}_{\eta (t)} }
	u(\bx,z, t) \cdot n^\eta 
	=
	\int_{{\Omega}_{\eta (t)}} \diver u (\bx, z)
	=0,
	\llabel{EQcons}
\end{split}
\end{align}
which leads to
\begin{align}
	\bar{\eta}_0
	:=
	\int_{\mathbb{T}^2} \eta_0 (\bx) 
	=
	\int_{\mathbb{T}^2} \eta (\bx,t) 
	\comma 
	0\leq t < T,
   \llabel{EQ01a}
\end{align}
i.e., the mean value of the fluid height is constant in time.
\end{Remark}

Throughout the paper, we denote by $C\geq 1$ a sufficiently large constant which may depend on the regularity of the solution $(u,\eta)$ and $T$, and the value of $C$ may vary from line to line. 

This paper is organized as follows.
In Section~\ref{sec03}, we prove our main result by combining two main ingredients: the distance estimate (Proposition~\ref{P01}) and an interpolation lemma (Lemma~\ref{L03}). In the proof of the distance estimate, we construct the test function $w$, establish the key identity (see~\eqref{dist2}) linking fluid dissipation, plate bending, $\Vert \eta^{-1} (t) \Vert_{L^1 (\tt)}$, and then close the estimate via Gronwall's inequality. The smooth approximation procedure that permits inserting the pair $(\Deltax \eta, w)$ as a test function in the weak formulation is justified in Appendix~\ref{sec04}. Finally, in Appendix~\ref{sec05}, we collect two nontrivial auxiliary lemmas used in the proof of the distance estimate.

\section{Proof of Theorem~\ref{T01}}
\label{sec03}
The following statement is a key step in the proof of Theorem~\ref{T01}. The terminology \emph{distance estimate} is taken from \cite[Proposition~3]{GH}.

\cole
\begin{Proposition}[Distance estimate]
	\label{P01}
Let $\eta$ be a
function such that $\inf_{\tt\times(0,T)}\eta>0$.
Assume that $(u,\eta)$ is a weak solution
	to~\eqref{EQNS}--\eqref{IC}.
	In addition, suppose that $(u,\eta)$ satisfies the regularity assumed in Theorem~\ref{T01}, i.e.,
	\begin{align}
		\begin{split}
			&
			u
			\in
			H^1 ((0,T), L^2 (\Omega_{\eta})^3),
			\\&
			\eta
			\in
			L^{\infty} ((0,T), H^{10/3} (\tt))
			\cap H^1 ((0,T), H^1 (\tt))
			.
		\end{split}
		\label{EQregularity}
	\end{align}
	Then there exists a constant $C>0$ depending on the regularity of $(u,\eta)$ and $(u_0,\eta_0,\eta_1)$
	such that
	\begin{align}
		\sup_{t\in (0,T)}
		\Vert \eta^{-1} (t,\cdot)\Vert_{L^1 (\tt)}
		\leq
		C.
		\llabel{EQ61}
	\end{align}
\end{Proposition}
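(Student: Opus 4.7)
My plan is to follow the strategy sketched in the introduction: insert the pair $(\Deltax\eta,w)$ into the weak formulation~\eqref{EQweak}, where $w$ is a carefully designed divergence-free vector field on $\Omega_{\eta}$ that produces the quantity $\Vert\eta^{-1}\Vert_{L^1(\tt)}$ on the favorable side of the resulting identity, and then close a Gronwall-type inequality. The first step is the construction of $w\colon\Omega_{\eta}\times(0,T)\to\R^3$ satisfying $\diver w=0$, $w|_{z=0}=0$, and $w(\bx,\eta(\bx,t),t)=\Deltax\eta(\bx,t)e_3$. A natural choice is $w_3(\bx,z,t)=(z/\eta(\bx,t))\Deltax\eta(\bx,t)$, with the horizontal components $(w_1,w_2)$ recovered so that $\diver w=0$, e.g.\ via a Bogovski\u{\i}-type operator applied to $-\partial_3 w_3=-\Deltax\eta/\eta$ slice by slice, or via a potential Ansatz that simultaneously enforces tangential vanishing at $z=0$ and at $z=\eta$. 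The regularity $\eta\in L^\infty H^{10/3}(\tt)$ assumed in~\eqref{EQregularity} is exactly what guarantees $w\in L^\infty H^1(\Omega_{\eta})$ with $\partial_t w\in L^2 L^2$, so that $w$ qualifies as a test function after approximation.

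Since $(\Deltax\eta,w)$ does not meet the smoothness requirements of Definition~\ref{D01}, its admissibility has to be justified by smooth approximants; the details are deferred to Appendix~\ref{sec04}. Once the approximation is in place, integrating the resulting identity in time on $[0,t]$ yields
$$
\Bigl[\int_{\tt}\partial_t\eta\,\Deltax\eta + \int_{\Omega_{\eta}}u\cdot w\Bigr]_0^t
= \int_0^t\!\int_{\Omega_{\eta}}\bigl(u\,\partial_t w + u\otimes u:\nabla w - \nabla u:\nabla w + f\cdot w\bigr) + \int_0^t\!\int_{\tt}\bigl(\partial_t\eta\,\Deltax\partial_t\eta + g\,\Deltax\eta - \Deltax\eta\,\Deltax^2\eta + \aa\,\partial_t\eta\,\Deltax^2\eta\bigr).
$$
Horizontal integration by parts on the plate side recasts the boundary contributions as $-\tfrac{1}{2}\frac{d}{dt}\Vert\nablax\eta\Vert_{L^2}^2$ on the left, and $-\Vert\nablax\partial_t\eta\Vert_{L^2}^2$, $\Vert\nablax\Deltax\eta\Vert_{L^2}^2$, and $\tfrac{\aa}{2}\frac{d}{dt}\Vert\Deltax\eta\Vert_{L^2}^2$ on the right.

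The heart of the argument is the evaluation of the fluid-dissipation contribution $-\int_0^t\!\int\nabla u:\nabla w$. Using the explicit form of $w$, the incompressibility $\diver u=0$, the boundary conditions $u|_{z=0}=0$ and $u_3|_{z=\eta}=\partial_t\eta$, and careful vertical integration by parts, this term should produce a multiple of $\Vert\eta^{-1}\Vert_{L^1(\tt)}$ as its leading contribution, with the remaining pieces integrable in view of~\eqref{EQregularity}. Collecting all contributions, I would arrive at a differential inequality of the schematic form
$$
\frac{d}{dt}F(t)+c\,\Vert\eta^{-1}(t)\Vert_{L^1(\tt)}\le C+C\,\Vert\eta^{-1}(t)\Vert_{L^1(\tt)},
$$
where $F(t)$ bundles the quantities $\tfrac{1}{2}\Vert\nablax\eta(t)\Vert_{L^2}^2$, $\tfrac{\aa}{2}\Vert\Deltax\eta(t)\Vert_{L^2}^2$, and $\int u(t)\cdot w(t)$, each of which is controlled by the hypotheses. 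Gronwall's inequality then delivers the asserted uniform bound $\sup_{t\in(0,T)}\Vert\eta^{-1}(t)\Vert_{L^1(\tt)}\le C$.

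The main obstacle is precisely this last algebraic step: arranging $w$ and the ensuing integrations by parts so that a \emph{favorable} multiple of $\eta^{-1}$ genuinely appears, and controlling every remainder in terms of the limited regularity~\eqref{EQregularity}. In two dimensions, \cite{GH,BR} exploit a stream-function representation of $w$ that is unavailable in three dimensions; this is precisely the source of the sharp regularity exponent $10/3$ required on $\eta$ and of the combinatorial delicacy of the bookkeeping in the identity.
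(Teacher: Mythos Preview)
Your overall plan---test the weak formulation with $(\Deltax\eta,w)$ for a divergence-free $w$ with $w|_{z=0}=0$ and $w|_{z=\eta}=\Deltax\eta\,e_3$, then close a Gronwall inequality for $\int_{\tt}\eta^{-1}$---matches the paper. But the decisive step, which you yourself flag as ``the main obstacle'', is precisely the construction of $w$, and your proposal $w_3=(z/\eta)\Deltax\eta$ with horizontal components recovered by a Bogovski\u{\i} operator will not deliver it.

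The paper's $w$ is not generic; it is the lubrication (Poiseuille) profile built from a pressure $q_0=-6/\eta^2$: one sets $w_\alpha=\tfrac12 z(z-\eta)\partial_\alpha q_0$ for $\alpha=1,2$ and determines $w_3$ from $\diver w=0$. Two algebraic facts are essential. First, $\partial_3^2 w_\alpha=\partial_\alpha q_0$, so after subtracting the free term $q\mathbb I_3$ (with $q=q_0+\partial_3 w_3$) from $\nabla w$ and integrating by parts, the volume piece $\int(\Delta w-\nabla q)\cdot u$ collapses to $\int\diver A(w)\cdot u$ for an explicit first-order matrix $A(w)$ whose $L^2$ norm is controllable by $\int_{\tt}\eta^{-1}$ plus bounded terms. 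Second, the boundary piece on $\{z=\eta\}$ reduces exactly to $-\int_{\tt}q_0\,\partial_t\eta=6\int_{\tt}\partial_t\eta/\eta^2=-6\,\frac{d}{dt}\int_{\tt}\eta^{-1}$. Thus $\int_{\tt}\eta^{-1}$ enters the identity as a \emph{time derivative on the left}, not as a dissipation term, and the closed inequality is of the integral Gronwall form $\int_{\tt}\eta^{-1}(t)\le C+C\int_0^t\int_{\tt}\eta^{-1}$.

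Neither of these mechanisms survives your choices. A linear-in-$z$ vertical component gives $\partial_3^2 w_3=0$ and no pressure-like $q_0$ whose boundary trace produces $\partial_t\eta/\eta^2$; and a Bogovski\u{\i} construction for $(w_1,w_2)$ is nonlocal and opaque, so the cancellation $\partial_3^2 w_\alpha-\partial_\alpha q_0=0$---which is what keeps the volume term at first order and makes $A(w)$ estimable---simply does not occur. In short, your plan lacks the specific algebraic identity that converts the viscous term into $-6\,\frac{d}{dt}\Vert\eta^{-1}\Vert_{L^1}$ plus a controllable remainder; without it, there is no route to the Gronwall step.
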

\colb

\begin{proof}
Define
\begin{align}
	&q_0 
	(x,t)
	= -\frac{6}{\eta^2 (x,t)}
	\label{def:q0}
\end{align}
and
\begin{align}
	\label{w_alpha}
	&w_\alpha(x, z,t) = \frac12 z(z-\eta(x, t))\pa_\alpha q_0,
	\qquad \alpha=1,2,
	\\&
	w_3 (x, z,t)=
	\frac{1}{12}
	z^2(3\eta-2z)\Deltax q_0 
	+ \frac14
	z^2\nablax\eta\cdot\nablax q_0.
	\label{w3}
\end{align}
{Note that the definition of $w$ resembles the fluid velocity in the
lubrication approximation regime and $q_0$ plays the role of the
pressure (see~\cite{BM} for the 2D case).} 
One may readily check that the test function $w= (w_1,w_2,w_3)$ defined by \eqref{w_alpha} and \eqref{w3} solves the
boundary value problem
\begin{align*}
\pa_i w_i &= 0,
\qquad \Omega_{\eta(t)} \times(0,T),
\\ 
w(x,0,t) &= 0,
\qquad x\in\tt,\ t\in (0,T),
\\
w(x, \eta(x, t),t) 
&
= \Deltax\eta e_3,
\qquad x\in\tt, 
\ t\in (0,T),
\end{align*}
and $q_0$ defined by \eqref{def:q0} satisfies \begin{align}\label{nabla_q}
\pa_\alpha q_0 = \pa_3^2w_\alpha
\comma
\alpha=1,2.
\end{align}
We define a pressure-like quantity as
\begin{align}
q(x,z,t) 
= 
q_0(x,t) 
+ 
\pa_3w_3(x,z,t).
\label{EQ100a}
\end{align}
By a smooth approximation, we may set $(\phi,\Phi)= (\Delta_x \eta, w)$ in~\eqref{EQweak} and obtain 
\begin{align}
\label{dist1}
\begin{split}
&
\int_0^t\int_{\Omega_\eta}
w u_t
+ 
\int_0^t\int_{\Omega_\eta} \nabla u : \nabla w
+ \int_0^t\int_{\tt}|\nablax\eta_t|^2
+
\int_0^t \int_{\tt}
(u \cdot \nabla u) w
\\&
- \int_0^t\int_{\tt}|\nablax\Deltax\eta|^2 +
\left.\int_{\tt} \left(-\frac{\aa}{2}|\Deltax\eta|^2 +
\eta_t\Deltax\eta\right)
\right|_{0}^t 
= \int_0^t\int_{\Omega_\eta}f  w
+
\int_0^t \int_{\tt} g \Deltax \eta
,
\end{split}
\end{align}
postponing the derivation to Appendix~\ref{sec04}.
Since $\div u=0$, we have
\begin{align}
\label{diss_term}
\begin{split}
	\int_0^t\int_{\Omega_\eta} \nabla u : \nabla w 
	&= \int_0^t\int_{\Omega_\eta}  (\nabla w - q\mathbb{I}_3): \nabla u
	\\&
	= \int_0^t\int_{\pa\Omega_\eta}(\nabla w - q\mathbb{I}_3) n^\eta\cdot u 
	- \int_0^t\int_{\Omega_\eta}\left(\Delta w - \nabla q\right)\cdot u.
\end{split}
\end{align}
Note that the first integral on the far right-hand side vanishes on the fixed part of the boundary $\tt \times \{z=0\}$. 
Using \eqref{EQ26}$_2$ and \eqref{EQ100a},
we obtain
\begin{align}
\label{int1}
\begin{split}
\int_0^t\int_{\pa\Omega_\eta}(\nabla w - q\mathbb{I}_3) n^\eta\cdot u
dS
= - \int_0^t \int_{\tt}
\left(\pa_1\eta\pa_1w_3 + \pa_2\eta\pa_2w_3 + q_0\right)\eta_t.
\end{split}
\end{align}
For the second integral on the far right-hand side of \eqref{diss_term}, we use \eqref{nabla_q} and \eqref{EQ100a} to obtain
\begin{align}
\begin{split}
	\int_0^t\int_{\Omega_\eta}\left(\Delta w - \nabla q\right)\cdot u &= \int_0^t\int_{\Omega_\eta}
	\begin{pmatrix}
	\pa_{11} w_1 + \pa_{22} w_1 + \pa_{33} w_1 - \pa_1q_0 - \pa_{13} w_3\\
	\pa_{11} w_2 + \pa_{22} w_2 + \pa_{33} w_2 - \pa_2 q_0 - \pa_{23} w_3\\
	\pa_{11} w_3 + \pa_{22} w_3 + \pa_{33} w_3 - \pa_{33} w_3
	\end{pmatrix}
        \cdot
        \begin{pmatrix}
	u_1\\u_2\\u_3
	\end{pmatrix}\\ 
	&= \int_0^t\int_{\Omega_\eta}
	\begin{pmatrix}
	\pa_{11} w_1 + \pa_{22} w_1 - \pa_{13} w_3\\
	\pa_{11} w_2 + \pa_{22} w_2 - \pa_{23} w_3\\
	\pa_{11} w_3 + \pa_{22} w_3
	\end{pmatrix}
        \cdot
	\begin{pmatrix}
	u_1\\u_2\\u_3
	\end{pmatrix}\\ 
	&= \int_0^t\int_{\Omega_\eta}\diver
	\begin{pmatrix}
	\pa_1 w_1 & \pa_2w_1 & -\pa_1w_3\\
	\pa_1 w_2 & \pa_2w_2 & -\pa_2w_3\\
	\pa_1 w_3 & \pa_2w_3 & 0
	\end{pmatrix}
       \cdot
   \begin{pmatrix}
	u_1\\u_2\\u_3
	\end{pmatrix}\\ 
	&=: \int_0^t\int_{\Omega_\eta}
	\diver A(w) \cdot u.
\label{int2}
\end{split}
\end{align}
Integrating by parts and employing the boundary conditions \eqref{EQ26}, we arrive at
\begin{align} 
\begin{split}
\label{int3}
\int_0^t\int_{\Omega_\eta}
\diver A(w) \cdot u &= \int_0^t\int_{\pa\Omega_\eta}
A(w) n^\eta\cdot
u - \int_0^t\int_{\Omega_\eta}
A(w) : \nabla u\\ 
&= - \int_0^t\int_{\tt}\left(\pa_1 w_3\pa_1\eta + \pa_2w_3\pa_2\eta\right)\eta_t - \int_0^t\int_{\Omega_\eta}
A(w) : \nabla u.
\end{split}
\end{align}
Inserting \eqref{int1}--\eqref{int3} into \eqref{diss_term}, we conclude that
\begin{align}
\begin{split}
&
\int_0^t\int_{\Omega_\eta} \nabla u : \nabla w = \int_0^t\int_{\Omega_\eta}
A(w) : \nabla u - \int_0^t\int_{\tt} q_0\eta_t
\\&
= \int_0^t\int_{\Omega_\eta}
A(w) : \nabla u + 6\int_0^t\int_{\tt} \frac{\eta_t}{\eta^2}
=  
\int_0^t\int_{\Omega_\eta}
A(w) : \nabla u 
- 
6\left.\int_{\tt} \frac{1}{\eta}\,
\right|_{0}^t.
\end{split}
\label{diss_term3}
\end{align} 
Combining \eqref{diss_term3} with~\eqref{dist1}, we arrive at
\begin{align}
\label{dist2}
\begin{split}
&
6\left.\int_{\tt} \frac{1}{\eta}\, 
\right|_{0}^t 
+ \int_0^t\int_{\tt}|\nablax\Deltax\eta|^2 
+ \left.\int_{\tt} \frac{\aa}{2}|\Deltax\eta|^2 
\right|_{0}^t 
\\&\quad
= \int_0^t\int_{\tt}|\nablax\eta_t|^2 
+
\left.\int_{\tt}  \eta_t \Deltax \eta \right|_{0}^{t}
+ 
\int_0^t\int_{\Omega_\eta}
A(w) : \nabla u 
+ 
\int_0^t\int_{\Omega_\eta}
w u_t
\\&\qquad
- 
\int_0^t\int_{\Omega_\eta} f  w
-
\int_0^t \int_{\tt} g \Deltax \eta
+\int_0^t \int_{\tt}
(u \cdot \nabla u) w
,
\\&\quad
=:
W_1 + W_2 + W_3+W_4+W_5+W_6+W_7.
\end{split}
\end{align}
Using the regularity of $\eta$ in~\eqref{EQ11b} and \eqref{EQregularity}, we have
\begin{align}
W_1+W_2
\leq 
C.
\label{EQ20a}
\end{align}
For the term $W_3$, using the Young's inequality, we arrive at
\begin{equation}
|W_3| 
\leq 
C\|A(w)\|_{L^2 L^2}^2 
+ 
C \|\nabla u\|_{L^2 L^2}^2
\leq
C \|A(w)\|_{L^2 L^2}^2
+C.
\label{EQW2}
\end{equation}
Recall that
\begin{equation*}
A(w) = \begin{pmatrix}
\pa_1 w_1 & \pa_2w_1 & -\pa_1w_3\\
\pa_1 w_2 & \pa_2w_2 & -\pa_2w_3\\
\pa_1 w_3 & \pa_2w_3 & 0
\end{pmatrix}.
\label{EQA}
\end{equation*}
Straightforward calculations of derivatives of~\eqref{w_alpha} and \eqref{w3} and integration in the vertical variable~$z$, reveal that
\begin{align}
\label{A22}
&
\int_0^t\int_{\Omega_\eta} |\nabla_x w_i|^2 
\leq 
C \int_0^t\int_{\tt}
\left(
\frac{|\nablax^2\eta|^2}{\eta}
+ \frac{ |\nablax\eta|^4}{\eta^3}
\right)
\end{align}
for $i=1,2$,
and
\begin{align}
\int_0^t\int_{\Omega_\eta} |\nabla_x w_3|^2 
\leq 
C \label{A3}\int_0^t
\int_{\tt}
\left( \eta |\nablax\Deltax\eta|^2
+
\frac{|\nablax^2\eta |^2 |\nablax\eta|^2}{\eta}
+ \frac{|\nablax\eta|^6}{\eta^3}
\right)
.
\end{align}
The first term on the right-hand side of \eqref{A22} may be estimated using \eqref{EQregularity}\textsubscript{2}
and the Sobolev embedding $H^{10/3} (\tt) \hookrightarrow W^{2,\infty} (\tt)$
as
\begin{equation*}
\int_0^t\int_{\tt}
\frac{|\nablax^2\eta|^2}{\eta} 
\leq 
C
\int_0^t\int_{\tt}\frac{1}{\eta}
,
\end{equation*}
while the second term on the right-hand side of \eqref{A22} may be estimated using Lemma~\ref{L01} as
\begin{align*}
\begin{split}
&\int_0^t\int_{\tt}
\frac{|\nablax \eta|^4}{\eta^3} 
\leq C\int_0^t\|\nablax^2\eta\|_{L^\infty (\tt)}^2
\int_{\tt} \frac{1}{\eta} 
\leq C\int_0^t\int_{\tt}
\frac{1}{\eta},
\end{split}
\end{align*}
using again
the Sobolev embedding $H^{10/3} (\tt) \hookrightarrow W^{2,\infty} (\tt)$.
Therefore, we conclude that for $i=1,2$, we have
\begin{equation}
\label{A22eps}
\int_0^t
\int_{\Omega_\eta} |\nabla_x w_i|^2 
\leq 
C \int_0^t\int_{\tt} \frac{1}{\eta}  .
\end{equation}
Next, we estimate the terms on the right-hand side of~\eqref{A3}.
For the first term, we have
\begin{align*}
\int_0^t\int_{\tt} \eta |\nablax\Deltax\eta|^2 
&
\leq 
\|\eta\|_{L^\infty L^\infty}
\|\nablax\Deltax\eta\|_{L^2 L^2}^2
\leq 
C.
\end{align*}
For the second and third terms, we use Lemma~\ref{L01} to obtain
\begin{align*}
\int_0^t\int_{\tt}
\frac{|\nablax^2 \eta|^2|\nablax\eta|^2}{\eta}&
\leq C\int_0^t\|\nablax^2\eta\|_{L^\infty}
\Vert \nablax^2 \eta\Vert_{L^2}^2
\leq 
C
\end{align*}
and
\begin{align*}
\int_0^t\int_{\tt} \frac{|\nablax\eta|^6}{\eta^3} \leq 
C \|\nablax^2\eta\|_{L^\infty L^\infty}^3 
\leq 
C.
\end{align*}
Therefore, we conclude that
\begin{equation}\label{A3eps}
\int_0^t\int_{\Omega_\eta} |\nabla w_3|^2 
\leq 
C.
\end{equation}
From \eqref{EQW2}--\eqref{A3} and \eqref{A22eps}--\eqref{A3eps}, it follows that
\begin{align}
|W_3| 
\leq 
C
+
C\int_0^t\int_{\tt}\frac{1}{\eta}.
\label{EQ20b}
\end{align}
The term $W_4$ is estimated using the Cauchy-Schwarz and Young's inequalities and the regularity of $u$ in~\eqref{EQregularity} as
\begin{align}
|W_4|
\leq
\int_0^t 
\Vert u_t\Vert_{L^2}
\Vert w\Vert_{L^2}
\leq
C+
\int_0^t \int_{\Omega_\eta}
|w|^2.
\label{EQ10b}
\end{align}
From the definition of $w$ in~\eqref{w_alpha} and \eqref{w3} and Lemma~\ref{L01}, we get
\begin{align}
\begin{split}
	&
	\int_{0}^{t}
	\int_{\Omega_\eta}|w|^2 \leq C\int_{0}^{t}\int_{\tt}
	\left(
	\frac{|\nablax\eta|^2}{\eta} + \frac{|\nablax\eta|^4}{\eta} + \eta|\Deltax\eta|^2\right)\\
	&\indeq
	\leq C\|\nablax^2\eta\|_{L^\infty L^\infty} 
	+ 
	C\|\nablax^2\eta\|_{
		L^\infty L^\infty}^2\|\eta\|_{L^\infty L^\infty} 
	+ 
	C\|\eta\|_{L^\infty L^\infty}
	\Vert \Deltax\eta \Vert_{L^2 L^2}^2
	\leq 
	C.
\label{EQ10c}
\end{split}
\end{align}
Combining \eqref{EQ10b} with~\eqref{EQ10c}, we obtain
\begin{align}
|W_4|
\leq 
C.
\label{EQ20c}
\end{align}
Similarly to \eqref{EQ10b} and \eqref{EQ10c}, the term $W_5$ is estimated as
\begin{align}
|W_5|
\leq
\int_{0}^{t}
\Vert f\Vert_{L^2}
\Vert w\Vert_{L^2}
\leq
C.
\label{EQ20d}
\end{align}
For the term $W_6$, we appeal to Cauchy-Schwarz inequality, yielding
\begin{align}
|W_6|
\leq
\Vert g\Vert_{L^2 L^2}
\Vert \Deltax \eta\Vert_{L^2 L^2}
\leq 
C.
\label{EQ20e}
\end{align}
Finally, we estimate the nonlinear term by following the approach in \cite[Proposition~3]{GH}:
\begin{align}\label{W7}
|W_{7}| 
&\leq \int_0^t\int_{\Omega_\eta}|(u\cdot\nabla)u\cdot w| \leq 
C\int_0^t\int_{\tt}\left(\int_0^\eta |u|^2\right)^{1/2}\left(\int_0^\eta |\nabla u|^2\right)^{1/2}\sup_{z\in(0,\eta)}|w|.
\end{align}
From \eqref{w_alpha} and \eqref{w3}, we have a pointwise estimate
\begin{equation}
	\sup_{z\in(0,\eta(x))}
	|w| 
	\leq 
	C\left(|\Deltax\eta| + \frac{|\nablax\eta|}{\eta} + \frac{|\nablax\eta|^2}{\eta}\right)
	\comma
	x \in\tt,
	\llabel{EQ63}
\end{equation}
which leads to three integral terms on the right-hand side of \eqref{W7},
\begin{align*}
W_{71} &=  \int_0^t\int_{\tt}\left(\int_0^\eta |u|^2\right)^{1/2}\left(\int_0^\eta |\nabla u|^2\right)^{1/2}|\Deltax\eta|  ,\\
W_{72} &=  \int_0^t\int_{\tt}\left(\int_0^\eta |u|^2\right)^{1/2}\left(\int_0^\eta |\nabla u|^2\right)^{1/2} \frac{|\nablax\eta|}{\eta}  ,\\
W_{73} &=  \int_0^t\int_{\tt}\left(\int_0^\eta |u|^2\right)^{1/2}\left(\int_0^\eta |\nabla u|^2\right)^{1/2} \frac{|\nablax\eta|^2}{\eta}  .
\end{align*}
First, we use the Cauchy-Schwarz inequality to get
\begin{align}
\begin{split}
W_{71} 
&
\leq 
\|\Deltax\eta\|_{L^\infty L^\infty}
\Vert u\Vert_{L^2 L^2}
\Vert \nabla u\Vert_{L^2 L^2}
\leq
C,
\label{EQ71}
\end{split}
\end{align}
where we used $\eta\in L^\infty W^{2,\infty}$.
For the term $W_{72}$, we employ Lemma~\ref{L02}
%
to get
\begin{align}
	W_{72} 
	\leq 
	C \int_0^t
	\|\nablax\eta\|_{L^\infty}
	\int_{\Omega_\eta}|\nabla u|^2 \leq C
	,
\label{EQ20f}
\end{align}
while the term $W_{73}$ is bounded analogously to $W_{72}$, yielding
\begin{align}
\begin{split}
W_{73}
\leq C
\int_{0}^{t}
\Vert \nablax \eta\Vert_{L^\infty}^2
\int_{\Omega_\eta}	|\nabla u|^2
\leq C.
\label{EQ20g}
\end{split}
\end{align}
Combining \eqref{dist2}--\eqref{EQ20a}, \eqref{EQ20b}, \eqref{EQ20c}--\eqref{W7}, and \eqref{EQ71}--\eqref{EQ20g}, we conclude that
\begin{equation*}
\int_{\tt}
\frac{1}{\eta(t)}  
\leq
C
+  C\int_0^t\int_{\tt}\frac{1}{\eta(s)}
\comma t\in(0,T),
\end{equation*}
which leads to
\begin{align}
\int_{\tt}
\frac{1}{\eta (t)}
\leq
C\comma
t\in (0,T),
\llabel{EQ64}
\end{align}
by the Gronwall’s inequality.
\end{proof}

\cole
\begin{lemma}
	\label{L03}
Let $\eta$ be a
function such that $\inf_{\tt}\eta>0$
with $\eta\in W^{2,\infty}(\tt)$ and $\eta^{-1}\in L^{1}(\tt)$. Then $\eta^{-1}\in L^{\infty}(\tt)$, and we have the estimate
\begin{equation}
    \|\eta^{-1}\|_{L^\infty(\tt)} \leq\frac{8}{M} \left(\exp\left(\frac{LM}{2\pi}\right)-1\right),
   \llabel{EQ01}
\end{equation}
where $L= \|\eta^{-1}\|_{L^1(\tt)}$ and $M = \|\eta\|_{W^{2,\infty}(\tt)}$.
\end{lemma}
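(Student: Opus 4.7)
The plan is to exploit the $W^{2,\infty}$ control on $\eta$ to force $1/\eta$ to concentrate a quantitative amount of mass near any minimizer of $\eta$, thereby converting the $L^1$-bound $L$ on $\eta^{-1}$ into a pointwise lower bound on $\eta$.

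First, I would locate the minimum: since $\eta\in W^{2,\infty}(\tt)\subset C(\tt)$ and $\tt$ is compact, the quantity $m:=\min_{\tt}\eta>0$ is attained at some $x_0\in\tt$. As the torus has no boundary, $x_0$ is an interior critical point, so $\nabla\eta(x_0)=0$. Using that the flat unit torus has injectivity radius $1/2$, so that the geodesic ball $B_{1/2}(x_0)$ is isometric to a Euclidean disk, I would then apply Taylor's theorem with integral remainder together with the bound $\|D^{2}\eta\|_{L^\infty(\tt)}\le M$ to obtain
\begin{equation*}
\eta(x)\le m+\tfrac{M}{2}|x-x_0|^2 \qquad \text{for all } x\in B_{1/2}(x_0).
\end{equation*}

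Next, restricting the $L^1$-integral to $B_{1/2}(x_0)$ and switching to polar coordinates yields
\begin{equation*}
L \;=\; \|\eta^{-1}\|_{L^1(\tt)} \;\ge\; \int_{B_{1/2}(x_0)}\frac{dx}{m+\tfrac{M}{2}|x-x_0|^2} \;=\; \frac{2\pi}{M}\ln\!\left(1+\frac{M}{8m}\right).
\end{equation*}
The final step is simply to solve this inequality for $1/m=\|\eta^{-1}\|_{L^\infty(\tt)}$, which produces exactly the claimed bound; the factor $8/M$ out front arises from $(1/2)^{-2}/2$ in the integration.

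The argument is essentially mechanical, and I expect no serious obstacle. The only step requiring any care is justifying the use of Euclidean Taylor expansion and polar-coordinate integration on the disk $B_{1/2}(x_0)\subset\tt$, which is precisely the injectivity-radius observation above and requires nothing beyond the flatness of $\tt$. Note that neither the strict positivity of $\eta$ nor the $L^1$-integrability of $\eta^{-1}$ enters beyond the existence of $m>0$ and the finiteness of $L$, respectively.
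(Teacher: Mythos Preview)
Your proposal is correct and follows essentially the same route as the paper: locate the minimizer $x_0$, use $\nabla\eta(x_0)=0$ and the $W^{2,\infty}$ bound to get the quadratic upper bound $\eta(x)\le m+\tfrac{M}{2}|x-x_0|^2$, restrict $\int_{\tt}\eta^{-1}$ to the ball $B_{1/2}(x_0)$, compute the resulting integral in polar coordinates, and invert the logarithm. The only cosmetic difference is that the paper phrases the ``ball on the torus'' issue via periodic extension of $\eta^{-1}$ rather than via the injectivity radius, and packages the radial integral as an auxiliary function $\Psi(M_1,M_2;R)$ before evaluating it.
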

\colb

\begin{proof}
For $M_1,M_2>0$ and $R>0$, we define
\begin{equation*}
\Psi(M_1,M_2;R) 
= 
\int_0^R\frac{r}{M_1 + M_2 r^2}\dd r.
\end{equation*}
Direct computation shows that
\begin{equation}
\Psi(M_1,M_2;R) 
= 
\frac{1}{2M_2}
\ln\left(1 + \frac{M_2R^2}{M_1}\right).
\label{EQ52}
\end{equation}
Since $\eta \in W^{2,\infty}(\tt)$ and $W^{2,\infty}(\tt) \hookrightarrow C^1 (\tt)$, we deduce that $\eta$ attains its minimum value $\eta_{\text{min}}>0$ at some point $x_0\in \tt$.
Then $\nabla\eta (x_0) = 0$ and the Taylor theorem implies
\begin{equation}
\eta(x) 
\leq 
\eta_{\text{min}} 
+
\frac{1}{2} \|\eta\|_{W^{2,\infty}}|x-x_0|^2
\comma
x\in \tt.
\label{EQ54}
\end{equation}
We set $R=1/2$ in $\Psi(M_1,M_2;R)$ and use the polar coordinates to obtain
\begin{equation}
\int_0^{1/2}\frac{r}{M_1 + M_2 r^2}\dd r 
= \frac{1}{2\pi}\int_{B(x_0,1/2)}\frac{1}{M_1 + M_2 |x-x_0|^2}
\dd x.
\label{EQ50}
\end{equation}
Using \eqref{EQ54}--\eqref{EQ50} and periodic extension of $\eta^{-1}$ to $B(x_0,1/2)$ if $B(x_0,1/2)\not\subseteq \tt$, we have
\begin{align}
\begin{split}
	L
	&=
	\int_{\tt}\frac{1}{\eta}\dd x \geq \int_{B(x_0,1/2)}\frac{1}{\eta}
	\geq \int_{B(x_0,1/2)}\frac{1}{\eta_{\text{min}} +  \frac{1}{2}  \|\eta\|_{W^{2,\infty}}|x-x_0|^2}
	\dd x
	\\&
	\geq 
	2\pi
	\int_{0}^{1/2}
	\frac{r}{\eta_{\text{min}}
	+
	M r^2/2
	}
	\dd r
	= 
	2\pi 
	\Psi(\eta_{\text{min}},M/2;1/2)
	.
\label{EQ53}
\end{split}
\end{align}
From \eqref{EQ52} and \eqref{EQ53}, it follows that
\begin{align*}
	\|\eta^{-1}\|_{L^\infty(\tt)} 
	= 
	\frac{1}{\eta_{\text{min}}}
	\leq 
	\frac{8}{M} \left(\exp\left(\frac{LM}{2\pi}\right)-1\right),
\end{align*}
and the proof of the lemma is concluded.
\end{proof}

\begin{proof}[Proof of Theorem~\ref{T01}]
Let $T_\ast\in (0,T)$ be any time which is strictly less than the possible
first contact.
Then $\eta$ is a strictly positive function on~$(0,T_\ast]$,
and we may apply
Proposition~\ref{P01} and Lemma~\ref{L03} to show that
\begin{align}
	\sup_{t\in (0,T_{\ast})}
	\Vert \eta^{-1} (\cdot,t)\Vert_{L^\infty (\tt)}
	\leq C
	,
	\label{EQ200a}
\end{align}
where $C$ is a constant with dependence as in the statement of
the theorem.

Since $T_\ast$ is an arbitrary time less than the possible first contact,
uniformity of the bound \eqref{EQ200a} implies that there is no
contact on $[0,T]$,
and the theorem is proven.
\end{proof}

\begin{Remark}[Comparison with the no-contact result from \cite{GH}]
The no-contact result in \cite{GH} is obtained for strong solutions of two-dimensional fluid-beam interaction system, where no additional regularity is needed to justify the couple of test functions $(\Delta_x\eta, w)$ in the distance estimate. Here, on the other hand, we work with weak solutions for
the three-dimensional fluid-plate system, for
which we need higher regularity assumption of~$\eta$.
Moreover, the interpolation lemma in \cite[Proposition~7]{GH} for the beam case requires $\eta\in H^2(\mathbb{T})$, which is a consequence of the energy estimate. Here, with the plate, we need $\eta\in W^{2,\infty}(\tt)$. One can easily construct a 2D-counterexample for \cite[Proposition~7]{GH} for the plate case. Pick $x_0\in\tt$ and $r>0$ such that $B(x_0,r)\subset \tt$. Then define $\eta_c(x) = |x-x_0|^{3/2}$ for $x\in B(x_0,r)$ and extend it outside of the ball to a smooth positive and periodic function. One can readily check that $\eta^{-1}_c\in L^1(\tt)$ and $\eta_c\in H^2(\tt)$, but obviously $\eta^{-1}_c\notin L^\infty(\tt)$.
\end{Remark}

\appendix

\section{~}
\subsection{Justification of the smooth approximation}
\label{sec04}
In this section we justify the use of the test pair $(\phi, \Phi) = (\Delta_x \eta, w)$ in the weak formulation \eqref{EQweak} in order to derive~\eqref{dist1}.
Recall that admissible test functions $(\phi, \Phi)$ must satisfy the regularity condition \eqref{EQ60a} and the coupling condition $\Phi (x,\eta (x,t), t) = \phi (x,t) e_3$.

Denote by $\sigma(u,p)=\nabla u - p\mathbb{I}_3$ the Cauchy stress tensor and $I=(0,T)$.
From \eqref{EQ11a} and regularity assumptions on $u$ in~\eqref{EQregularity}, we have
\begin{align}\label{EQ255}
	\diver \sigma 
	= 
	\partial_t u 
	+ 
	(u\cdot \nabla) u 
	- 
	f  \in L^{1}(I,L^{3/2}(\Omega_{\eta (t)})),
\end{align}
where we used the Sobolev embedding $H^1(\Omega_{\eta (t)}) \hookrightarrow L^6 (\Omega_{\eta (t)})$ to bound the nonlinear term.
Therefore, $\diver\sigma \in L^{3/2}(\Omega_{\eta(t)})$ for a.e.~$t\in I$. 
Since $\partial \Omega_{\eta (t)}$ is Lipschitz by the regularity \eqref{EQregularity}, the trace theorem \cite[Lemma~1.2.3, page 51]{So} implies
\begin{align}
	\sigma n^\eta
	\in 
	L^1 (I, W^{-2/3,3/2} (\partial \Omega_{\eta (t)})),
	\llabel{EQ65}
\end{align}
where $W^{-2/3,3/2} (\partial \Omega_{\eta (t)})$ is the dual space of $W^{2/3,3} (\partial \Omega_{\eta (t)})$. 
Moreover, for $\xi\in L^\infty (I, W^{1,3} (\Omega_{\eta (t)}))$, we have
\begin{align}
	\int_0^T
	\int_{\Omega_{\eta (t)}} (\diver\sigma)\xi
=
\int_{0}^T
	\langle \sigma n^\eta, 
	\xi|_{\partial \Omega_{\eta (t)}} 
	\rangle_{\partial \Omega_{\eta (t)}}
	-
	\int_{0}^T
	\int_{\Omega_{\eta (t)}} 
	\sigma: \nabla \xi, 
	\label{trace}
\end{align}
where
$\langle \sigma n^\eta, \xi|_{\partial \Omega_{\eta (t)}} \rangle_{\partial \Omega_{\eta (t)}}$ is well-defined in the sense of the generalized trace.
For any \\$\phi \in L^\infty (I,W^{1,3} (\tt))$ with $\int_{\tt} \phi (x,t) dx= 0$ for a.e.~$t\in I$, we define the extension to the fluid domain
\begin{align}
	U(x,z,t)
	=
	(0,0,  z \phi (x,t)/\eta (x,t)),
	\llabel{EQ66}
\end{align}
where $(x, z,t)\in \Omega_{\eta (t)} \times I$.
Then $U \in L^\infty (I, W^{1,3} (\Omega_{\eta (t)}))$ and by the trace theorem, we have
$U |_{\partial\Omega_{\eta(t)}}   \in L^\infty (I, W^{2/3,3} (\partial \Omega_{\eta (t)}))$.
Direct calculation shows that $\int_{\partial \Omega_{\eta (t)}} U \cdot n^\eta dS=0$ for a.e.~$t\in I$.
For $\eps,\delta>0$,
let $\rho_\eps: \tt \to \mathbb{R}_{+}$ and $\theta_\delta: \mathbb{R} \to \mathbb{R}_{+}$ be the standard radial
mollifiers which satisfy $\int_{\tt} \rho_\eps (x) dx=1$ and $\int_{\mathbb{R}} \theta_\delta (s) ds =1$.
We define the mollifications in space
\begin{align*}
	\zeta^\eps (x)
	= 
(	\rho_\eps \ast \zeta) (x)
=\int_{\tt} \rho_\eps (x-y) \zeta (y) dy
\end{align*}
and
\begin{align*}
	\zeta^{\eps, \eps} 
	= 
	{\rho}_\eps \ast \zeta^\eps
	={\rho}_\eps \ast (\rho_\eps \ast \zeta)
	.
\end{align*}
We define the mollification in time 
\begin{align*}
	\zeta_\delta (t)
	= 
	( \theta_\delta 
	 \ast \zeta) (t)
	 = 
	 \int_{B_\delta} 
	 \theta_\delta (s) \zeta (t-s) ds.
\end{align*}
It is easy to check that for any $\zeta \in L^2 (\tt)$, we have
\begin{align}
	\int_{\tt}
	\zeta^{\eps,\eps}
	(x) \zeta (x) dx
	=\int_{\tt} 
	|\zeta^{\eps} (x)|^2 dx.
	\label{EQ61}
\end{align}
For $\eps, \delta>0$,
we take $\phi =\Delta_x \eta$ and
 $\phi^\eps_\delta (x,t)= \Delta_x \eta^{\eps,\eps}_\delta (x,t)$,
where we extend $\eta(x,t)=\eta(x,0)$ for $t<0$ and $\eta (x,t)= \eta (x,T)$ for $t>T$. 
From \cite[Exercise III.3.5, page 176]{Ga}, 
there exists some $\Phi^\eps_\delta \in L^\infty (I, W^{1,3} (\Omega_{\eta (t)}))$ such that
\begin{align}
	\begin{split}
		&\diver \Phi^\eps_\delta
		= 0 \inon{in~$\Omega_{\eta (t)}$}
		\\&
		\Phi^\eps_\delta
		=U_\delta^\eps \inon{on~$\partial \Omega_{\eta (t)}$},
		\label{EQphi}
	\end{split}
\end{align}
where $U^\eps_\delta (x,z,t)= (0,0,z \phi^\eps_\delta (x,t)/ \eta(x,t))$. 
Similarly, we obtain a corresponding $\Phi$ from~$\phi$.
The construction of $\Phi^\eps_\delta$ is performed pointwise in time via a bounded linear operator. Consequently, $\Phi^\eps_\delta$ is measurable as a map from $I$ into the space $W^{1,3} (\Omega_{\eta(t)})$.
It is easy to see that $\Phi^\eps_\delta (x, \eta(x,t),t) = \phi^\eps_\delta e_3$.
Therefore, the pair $(\phi^\eps_\delta, \Phi^\eps_\delta)$ is admissible as test functions in \eqref{EQweak}, and we obtain
\begin{align}
\begin{split}
	&
		\left.
	\int_{\tt} \eta_t \phi_\delta^\eps \right|_{0}^T
	=
	\int_{0}^T \int_{\tt}
	\left( \eta_t (\phi^\eps_\delta)_t
	+ 
	g \phi^\eps_\delta
	-
	\Deltax \eta \Deltax \phi^\eps_\delta
	+
	\aa \eta_t  \Deltax \phi^\eps_\delta
	\right)
	\\&\qquad
	-
	\int_{0}^T 
	\int_{{\Omega}_\eta (t)}
	(\pt u
	+(u\cdot \nabla )u -f
	)\cdot \Phi^\eps_\delta
	-
	\int_{0}^T 
	\int_{{\Omega}_\eta (t)}
	\nabla u : \nabla \Phi^\eps_\delta
	,
	\label{EQ40a}
\end{split}
\end{align}
where we integrated in time and used integration by parts.
Inserting $\xi = \Phi^\eps_\delta$ in \eqref{trace}, we have
\begin{align}
	\begin{split}
		&
		\int_{0}^T
		\langle \sigma n^\eta,
		\Phi^\eps_\delta
		|_{\partial \Omega_{\eta (t)}} 
		\rangle_{
		\partial\Omega_{\eta(t)}} 
		=
				\int_{0}^T
		\int_{\Omega_{\eta(t)}}
		\Phi^\eps_\delta 
		\cdot \diver\sigma
		+
				\int_{0}^T
		\int_{\Omega_{\eta(t)}} \sigma : \nabla \Phi_\delta^\eps
		\\&\qquad
		=
				\int_{0}^T
				\int_{\Omega_{\eta(t)}}\left (\partial_t u 
		+ 
		(u\cdot \nabla) u - f\right )\cdot \Phi^\eps_\delta
		+ 
				\int_{0}^T
		\int_{\Omega_{\eta(t)}} (\nabla u - p\mathbb{I}_3):
		\nabla \Phi^\eps_\delta
		.
	\end{split}
	\label{EQ40}
\end{align}
%
%
From \eqref{EQ40a} and \eqref{EQ40} it follows that
\begin{align}\label{EQWP}
	\begin{split}
		&
		\int_{0}^T \int_{\tt}
		\left( 
		\eta_t (\phi^\eps_\delta)_t
		+ 
		g \phi^\eps_\delta
		-
		\Deltax \eta \Deltax \phi^\eps_\delta
		+
		\aa \eta_t  \Deltax \phi^\eps_\delta
		\right)
		\\&\qquad
		=
		\int_{0}^T
		\langle 
		\sigma n^\eta, \Phi_\delta^\eps
		|_{\partial \Omega_{\eta (t)}}
		\rangle_{\partial\Omega_\eta(t)} 
		+
		\left.
		\int_{\tt}  \eta_t \phi^\eps_\delta  \right|^T_0.
	\end{split}
\end{align}
Inserting $\phi^\eps_\delta (x,t)= \Deltax \eta^{\eps,\eps}_\delta (x,t)$ into \eqref{EQWP}, using the identity \eqref{EQ61}, and passing to the limit $\eps,\delta \to 0$, we obtain
\begin{align}
	\begin{split}
		\left.\int_{\tt}
		\left( \eta_t
		\Deltax \eta
		-
		\frac{\nu}{2}
		|\Deltax \eta|^2\right)
		\right|_{0}^T
		&=
		\int_{0}^T \int_{\tt}
		\left(
		-| \nablax\eta_t|^2 
		+ 
		g \Deltax \eta
		+
		|\nablax 
		\Deltax \eta|^2
		\right) 
		\\&\quad
		-
		\int_{0}^T
		\langle \sigma n^\eta, \Phi |_{\partial \Omega_{\eta (t)}}
		\rangle_{\partial\Omega_{\eta (t)} }.
	\end{split}
	\label{EQ12b}
\end{align}
The second term on the right-hand side of \eqref{EQ12b} is well-defined since $\Phi|_{\partial \Omega_\eta} \in L^\infty (I, W^{2/3,3}(\partial \Omega_\eta))$ and $\sigma n^\eta
\in 
L^1 (I, W^{-2/3,3/2} (\partial \Omega_{\eta (t)}))$.
It is easy to check that $w$ defined in \eqref{w_alpha}--\eqref{w3} satisfies $w\in L^\infty (I, W^{1,3} (\Omega_{\eta(t)}))$ since $\eta \in L^\infty (I, H^{10/3} (\tt))$.
Letting $\xi =w$ in~\eqref{trace}, we arrive at
\begin{align}
	\begin{split}
		\int_0^T
		\int_{\Omega_\eta}
		\diver \sigma \cdot w
		&=
		\int_{0}^{T}
		\langle
		\sigma n^\eta, w|_{\partial \Omega_\eta}
		\rangle_{\partial \Omega_\eta}
		-
		\int_0^T
		\int_{\Omega_\eta}
		(\nabla u -p\mathbb{I}_3) : \nabla w
		\\&
		=
		\int_0^T
		\int_{\Omega_\eta}
		(\partial_t u + (u\cdot \nabla )u - f) \cdot w.
	\end{split}
	\label{EQ251}
\end{align}
It is clear that $\Phi (x, \eta(x,t),t) = \Deltax \eta (x,t) e_3 = w(x,\eta(x,t),t)$.
Hence,
combining \eqref{EQ12b} with~\eqref{EQ251}, we obtain~\eqref{dist1}.

\subsection{Auxiliary lemmas}\label{sec05}

\begin{Lemma}
	\label{L01}
	There exists a constant $C>0$ such that 
	 for any positive function $\eta\in W^{2,\infty}(\tt)$, we have
	\begin{equation}
		|\nablax\eta(x)|^2
		\leq C\|\nablax^2\eta\|_{L^\infty(\tt)}\eta(x)
		\comma
		x\in\tt.
		\llabel{EQ60a}
	\end{equation}
\end{Lemma}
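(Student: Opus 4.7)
The plan is to reduce the claim to a one-dimensional Glaeser-type inequality for nonnegative functions by slicing along lines. Fix an arbitrary $x_0 \in \tt$; the inequality is trivial if $\nablax\eta(x_0) = 0$, so assume $\nablax\eta(x_0)\neq 0$ and set $v = \nablax\eta(x_0)/|\nablax\eta(x_0)|$. Define the one-dimensional slice
$$
g(t) = \eta(x_0 + tv), \quad t \in \mathbb{R},
$$
which is well-defined on all of $\mathbb{R}$ by the periodicity of $\eta$ and belongs to $W^{2,\infty}(\mathbb{R})$ with $\|g''\|_{L^\infty(\mathbb{R})} \leq \|\nablax^2\eta\|_{L^\infty(\tt)} =: M$ and $g'(0) = |\nablax\eta(x_0)|$. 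Since $\eta$ is positive on $\tt$, we have $g(t) \geq 0$ for every $t \in \mathbb{R}$.

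Next I would apply Taylor's formula with integral remainder: for every $h > 0$,
$$
g(-h) = g(0) - g'(0) h + \int_0^{-h} \!\!\int_0^s g''(r)\, dr\, ds \leq g(0) - g'(0) h + \frac{M}{2} h^2.
$$
Combining with $g(-h) \geq 0$ yields
$$
g'(0) h \leq g(0) + \frac{M}{2} h^2, \quad h > 0.
$$
Optimizing the right-hand side over $h>0$ (the minimum is attained at $h = \sqrt{2g(0)/M}$) gives $g'(0) \leq \sqrt{2 M g(0)}$, and squaring and restoring notation produces
$$
|\nablax\eta(x_0)|^2 \leq 2 \|\nablax^2\eta\|_{L^\infty(\tt)}\, \eta(x_0),
$$
which is the claimed inequality with $C = 2$.

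I do not anticipate any serious obstacle. The only mild subtlety is justifying the Taylor bound at $W^{2,\infty}$ regularity rather than $C^2$, which is handled by writing $g'$ as the indefinite integral of its almost-everywhere-defined derivative $g''$ and using $\|g''\|_{L^\infty}\leq M$; integrating twice then gives the required quadratic envelope. Periodicity is essential so that the slice $g$ is defined and nonnegative on all of $\mathbb{R}$, allowing the optimization over $h>0$ without any boundary obstruction.
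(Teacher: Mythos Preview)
Your proof is correct. It is essentially the classical Glaeser inequality: slice along the gradient direction, use Taylor's theorem together with nonnegativity, and optimize in the step size~$h$. The $W^{2,\infty}$ justification you sketch is adequate, since $g'\in W^{1,\infty}(\mathbb{R})$ is absolutely continuous and the integral form of the remainder follows.

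The paper takes a different route. It works coordinate by coordinate and uses the square-root substitution $z=\sqrt{\eta}$: at a point where $\partial_\alpha z$ is extremal on $\tt$ one has $\partial_{\alpha\alpha}z=0$, so $|\partial_\alpha z|^2\le \|(\partial_\alpha z)^2+z\,\partial_{\alpha\alpha}z\|_{L^\infty}=\tfrac12\|\partial_{\alpha\alpha}\eta\|_{L^\infty}$, and translating back to $\eta$ gives $|\partial_\alpha\eta|^2\le 2\|\partial_{\alpha\alpha}\eta\|_{L^\infty}\,\eta$. Your approach is more direct, yields the sharp constant $C=2$ for the full gradient in one stroke, and avoids having to verify that $\sqrt{\eta}\in W^{2,\infty}(\tt)$ (which in the paper's argument implicitly uses $\inf_{\tt}\eta>0$, automatic by compactness but still an extra step). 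The paper's approach, on the other hand, gives the slightly finer componentwise information $|\partial_\alpha\eta|^2\le C\|\partial_{\alpha\alpha}\eta\|_{L^\infty}\eta$ and relies only on the extremum structure of periodic functions rather than an optimization. Both arguments use the absence of boundary in an essential way.
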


The proof is analogous to \cite[Proposition~6]{GH};
	for completeness, we provide it here.
\begin{proof}
	Let $z\in W^{2,\infty} (\tt)$ be a positive function.
	Since $z$ is periodic, we infer that $\partial_1 z$ attains its maximum and minimum on~$\tt$.
	Suppose the maximum is attained at $a\in \tt$.
	Then $\partial_{11} z (a) =0$ and hence
	\begin{align}
	\begin{split}
			|\partial_1 z (a)|^2
		&\leq
	|\partial_1 z (a)|^2
		+
		z
		\partial_{11} z (a)
		\leq
		\Vert |\partial_1 z|^2 + z \partial_{11} z\Vert_{L^\infty (\tt)}.
		\llabel{EQ61}
	\end{split}
	\end{align}
An analogous inequality holds at the point where $\partial_1 z$ attains its minimum.
Therefore, we have
\begin{align}
	|\partial_1 z (x)|
	\leq
	\Vert |\partial_1 z|^2 + z \partial_{11} z\Vert_{L^\infty (\tt)}^{1/2}
	\comma
	x\in \tt.
	\label{EQ60}
\end{align}
Now, set $z(x)=\sqrt{\eta (x)}$ for $x\in\tt$.
Since $\eta \in W^{2,\infty} (\tt)$ and $\eta>0$, it follows that $z\in W^{2,\infty} (\tt)$ and $z>0$.
Applying the estimate \eqref{EQ60} gives
\begin{align}
	|\partial_1 \eta (x)|
	\leq
	C
	\sqrt{\Vert \partial_{11} \eta\Vert_{L^\infty} \eta (x)}
	\comma
	x\in \tt.
	\label{EQ62}
\end{align}
Similar arguments yield
\begin{align}
	|\partial_2 \eta (x)|
	\leq
	C
	\sqrt{\Vert \partial_{22} \eta\Vert_{L^\infty} \eta (x)}
	\comma
	x\in \tt.
	\label{EQ63}
\end{align}
Combining \eqref{EQ62} and \eqref{EQ63} then completes the proof.
\end{proof}

We finally establish the following
Poincar\'e type inequality.
\begin{Lemma}
	\label{L02}
	Let $\etah\in L^{\infty}(\tt)$ with $\eta>0$ and
	$u\in H^1 (\Omega_{\eta})$ be such that
	$u|_{z=0}=0$. Then we have
	\begin{align}
		\llabel{EQ:Poincare}
		\left(\int_0^{\eta (x)} |u|^2\right)^{1/2}
		\leq 
		C\eta(x)
		\left(\int_0^{\eta (x)} |\nabla u|^2\right)^{1/2}
		\comma
		x \in \tt.
	\end{align} 
\end{Lemma}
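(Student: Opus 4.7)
The inequality is a fibrewise Dirichlet–Poincar\'e estimate on the vertical segment $(0,\eta(x))$, so the plan is to reduce it to the one-dimensional case and then bound the resulting constant by $\eta(x)$.

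First, I would fix $x\in\tt$ and work on the one-dimensional interval $(0,\eta(x))$. For a smooth function $v\in C^1([0,\eta(x)])$ with $v(0)=0$, the fundamental theorem of calculus gives $v(z)=\int_{0}^{z}\pa_3 v(s)\,ds$, so that by Cauchy–Schwarz,
\begin{equation*}
|v(z)|^2\leq z\int_{0}^{z}|\pa_3 v(s)|^2\,ds\leq \eta(x)\int_{0}^{\eta(x)}|\pa_3 v(s)|^2\,ds.
\end{equation*}
Integrating this inequality in $z$ over $(0,\eta(x))$ yields
\begin{equation*}
\int_{0}^{\eta(x)}|v(z)|^2\,dz\leq \eta(x)^2\int_{0}^{\eta(x)}|\pa_3 v(s)|^2\,ds\leq \eta(x)^2\int_{0}^{\eta(x)}|\nabla v(s)|^2\,ds,
\end{equation*}
which is the claimed estimate for smooth $v$ on a single fibre.

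Next, I would lift this fibrewise inequality from smooth functions to $u\in H^1(\Omega_\eta)$ with vanishing trace on $\{z=0\}$. A standard Fubini argument for Sobolev functions (see, e.g., the slicing characterisation of $H^1$) guarantees that for almost every $x\in\tt$ the trace $u(x,\cdot)$ belongs to $H^1(0,\eta(x))$, with weak derivative given by the restriction of $\pa_3 u$, and that the zero trace at $z=0$ of $u$ descends to $u(x,0)=0$ in the one-dimensional sense for a.e.\ $x$. Applying the fibrewise inequality to $v=u(x,\cdot)$ then gives the desired bound pointwise in $x$.

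The only real subtlety is the descent of the trace: since $\eta\in L^\infty(\tt)$ with $\eta>0$, the domain $\Omega_\eta$ is contained in a fixed slab $\tt\times(0,\|\eta\|_{L^\infty})$ with a flat bottom boundary $\{z=0\}$. Hence the trace $u|_{z=0}=0$ in the sense of $H^{1/2}(\tt)$ coincides with the a.e.\ fibrewise trace via the standard Sobolev trace characterisation, so no further regularity of $\eta$ is needed. This is the step I expect to require the most care, but once it is in hand the inequality follows immediately with a constant $C$ independent of $\eta$.
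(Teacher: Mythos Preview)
Your proposal is correct and follows essentially the same route as the paper: write $u(x,z)=\int_0^z \pa_3 u(x,z')\,dz'$ using the vanishing trace at $z=0$, apply Cauchy--Schwarz to get $|u(x,z)|^2\le z\int_0^{\eta(x)}|\pa_3 u|^2$, and integrate in $z$ over $(0,\eta(x))$. The paper's proof is more terse and does not spell out the fibrewise slicing and trace descent you discuss, but the underlying computation is identical.
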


\begin{proof}
Using the Cauchy-Schwarz inequality and boundary condition $u|_{z=0}=0$, we calculate
\begin{align*}
	&\int_{0}^{\etah(\bx)}
	u(\bx,z)^2 
	\dd z
	=
	\int_{0}^{\etah(\bx)}
	\left (\int_{0}^{z}
	\partial_3u(\bx,z')\dd z'
	\right )^2
	\dd z
	\\&\quad
	\leq 
	\int_{0}^{\eta (x)}
	\left( z\int_{0}^{z}
	(\partial_3 u)^2(\bx,z')
	\dd z'
	\right)
	\dd z
	\leq
	C \eta^2 (x) 
	\int_0^{\eta (x)}
	|\nabla u (x,z)|^2 \dd z,
\end{align*}
for any $x\in \tt$,
and the lemma is proven.
\end{proof}

\section*{Acknowledgments}
We sincerely thank the reviewers for the valuable feedback and suggestions which resulted in an improved version of the paper.
MB and BM were supported by the Croatian Science Foundation under project IP-2022-10-2962, while
IK was supported in part by the NSF grant DMS-2205493.

\end{document}